\DeclarePairedDelimiter{\floor}{\lfloor}{\rfloor}
\DeclarePairedDelimiter\ceil{ \lceil}{ \rceil}
\theoremstyle{plain}
\newtheorem{theorem}{Theorem}[section]
\newtheorem{corollary}[theorem]{Corollary}
\newtheorem{lemma}[theorem]{Lemma}
\newtheorem{proposition}[theorem]{Proposition}
\theoremstyle{definition}
\newtheorem{definition}[theorem]{Definition}
\theoremstyle{remark}
\numberwithin{equation}{section}
\newcommand{\N}{\mathbbm{N}}
\newcommand{\R}{\mathbbm{R}}
\newcommand{\Z}{\mathbbm{Z}}
\newcommand{\p}{\mathbbm{P}}
\newcommand{\cA}{\mathcal{A}}
\newcommand{\cG}{\mathcal{G}}
\newcommand{\cF}{\mathcal{F}}
\newcommand{\cH}{\mathcal{H}}
\newcommand{\cN}{\mathcal{N}}
\newcommand{\cO}{\mathcal{O}}
\newcommand{\E}[1]{\mathbbm{E}\left [ \, #1 \, \right ]}
\renewcommand{\epsilon}{\varepsilon}
\renewcommand{\phi}{\varphi}
\newcommand{\pspace}{(\Omega,\cA,\p)}
\newcommand{\norm}[1]{\left\lVert #1 \right\rVert}
\newcommand{\1}[1]{\,\mathbbm{1}\! \left\{ #1 \right\} }
\begin{document}

\title{A Bernstein Inequality For Exponentially Growing Graphs\footnote{This research was supported by the Fraunhofer ITWM, 67663 Kaiserslautern, Germany which is part of the Fraunhofer Gesellschaft zur F{\"o}rderung der angewandten Forschung e.V. The author thanks Hannes Christiansen for proofreading parts of the article.}
}
\author{Johannes T. N. Krebs\footnote{Department of Mathematics, University of Kaiserslautern, 67653 Kaiserslautern, Germany, email: \tt{krebs@mathematik.uni-kl.de} }
}
\date{\today}
\maketitle

\vspace{1cm}

\begin{abstract}
\setlength{\baselineskip}{1.8em}
In this article we present a Bernstein inequality for sums of random variables which are defined on a graphical network whose nodes grow at an exponential rate. The inequality can be used to derive concentration inequalities in highly-connected networks. It can be useful to obtain consistency properties for nonparametric estimators of conditional expectation functions which are derived from such networks. \medskip\\
\noindent {\bf Keywords:} Asymptotic inference; asymptotic inequalities; Bernstein inequality; Concentration inequality; Graphs; Highly-connected graphical networks; Mixing; Nonparametric statistics; Random fields; Stochastic processes\\
\noindent {\bf MSC 2010:} Primary: 62G20,  	62M40, 90B15; Secondary: 62G07, 62G08, 91D30
\end{abstract}

\vspace{1cm}


Inequalities of the Bernstein type are an important tool for the asymptotic analysis in probability theory and statistics. The original inequality derived by \cite{bernstein1927} gives bounds on $\p( |S_n| > \epsilon)$, where $S_n=\sum_{k=1}^n Z_k$ for bounded random variables $Z_1,\ldots,Z_n$ which are i.i.d. and have expectation zero. There are various versions of Bernstein's inequality, e.g., \cite{hoeffding1963probability}. In particular, generalizations to different kinds of stochastic processes have gained importance: \cite{carbon1983}, \cite{collomb1984proprietes}, \cite{bryc1996large} and \cite{merlevede2009} provide extensions to times series $\{ Z_t: t\in\Z\}$ which are weakly dependent. \cite{frankeBernstein} give a further generalization to strong mixing random fields $\{Z_s: s\in\Z^N\}$ which are defined on the regular lattice $\Z^N$ for some lattice dimension $N \in\N_+$. The corresponding definitions of dependence are given in \cite{doukhan1994mixing} and in \cite{bradley2005basicMixing}.\\
Bernstein inequalities in particular find their applications when deriving large deviation results in nonparametric regression and density estimation, compare \cite{gyorfi1989nonparametric} and \cite{gyorfi}.\\
In this article we derive a new Bernstein inequality which adapts to highly-connected networks where the number of nodes grows at an exponential rate. A well-known example for such a graph is the internet map which tries to represent the internet with visual graphics. Another application may be nested simulations which are used in insurance mathematics to simulate the outcome of an insurance contract. Based on this new Bernstein inequality, we derive a concentration inequality which ensures that in simulations the nonparametric regression or density function estimator is consistent. It turns out that we need a somewhat stricter decay in the $\alpha$-mixing coefficients than it is usually assumed in the case for time series. Due to the special geometric structure of the underlying data, many technical aspects in the proofs of these new inequalities are much more involved than it is the case for time series data or for data which is defined on a lattice.\\
This paper is organized as follows: we give the motivation and the definitions in Section~\ref{Introduction}. Section~\ref{Bernstein} contains the new Bernstein inequality and concentration inequalities for exponentially growing graphs, it is the main part of this article. The Appendix~\ref{Appendix} contains a useful result of \cite{davydov1968convergence}.

\section{Introduction}~\label{Introduction}

In this section we consider a general graph $G=(V,E)$ with a countable set of nodes $V$ and a set of edges $E$. We define the natural metric on $G$ as the minimal number of edges between two nodes
\begin{align*}
		&d_G: V \times V \rightarrow \N, \\
		&\qquad \qquad (v,w) \mapsto \inf \big \{l \in \N \text{ such that there are } (v_0,v_1),...,(v_{l-1},v_l) \in E \text{ with } \, v_0 = v, v_l = w \big\}.
\end{align*}
The metric $d_G$ is extended to sets $I,J \subseteq V$ in the usual way: $d_G(I,J) = \inf\{ d_G(v,w): v\in I, w\in J\}$. We denote by $\cN(v)$ the set of neighbors of $v$ w.r.t.\ $G$ for a node $v$ of a graph $G=(V,E)$. Furthermore, we assume that there is a probability space $\pspace$ which is endowed with a real-valued random field $Z$. The latter is indexed by the set of nodes $V$, i.e., $Z$ is a family of random variables $\{ Z_v:v\in V\}$ such that $Z_v: \Omega\rightarrow S$ is measurable for each $v\in V$. We denote the indicator function by $\mathbbm{1}$ and we define the $\alpha$-mixing coefficient of the random field $\{Z_v: v\in V\}$ on the graph $G=(V,E)$ by
\begin{align*}
	\alpha_G(n) \coloneqq \sup_{ \substack{ I, J \subseteq V,\\ d_G(I,J) \ge n}} \sup_{ \substack{ A \in \cF( I ),\\ B\in \cF(J)} } | \p(A \cap B) - \p(A)\p(B) |, \quad n\in\N.
	\end{align*}
The random field is strong mixing w.r.t.\ $G$ if and only if $\alpha_G(n) \rightarrow 0$ for $n\rightarrow \infty$. In the sequel, we investigate random fields which are defined on the following class of graphs:

\begin{definition}[Trees growing at an exponential rate $A$]
Let $A\in\N_+$. A tree $T=(V,E)$ is growing at an exponential rate $A$ if $T$ is a rooted tree and each node $v \in V$ has exactly $A$ children. The nodes in the tree are labeled according to the following scheme: the distinguished root (which has no parent) is labeled by $(0,0)$ and the children of the node $(j,k)$ are labeled by $(j+1, A (k-1) + 1),\ldots, (j+1, A k)$. Hence, the set of nodes and the set of edges are given by
\[
	V = \left\{ (j,k): j\in\N,\, 1\le k \le A^j \right\} \text{ and } E = \left\{ ( (j,k), (j+1,k') ):\, (j,k)\in V \text{ and } A(k-1)+1 \le k' \le A k \right\}.
	\]
A rooted graph $G=(V,E)$ is growing at an exponential rate $A$ if the edges $E$ can be decomposed into two disjoint sets as $ E = E' \cup \tilde{E}$ such that $(V,E')$ is a tree growing at an exponential rate $A$ and the set $\tilde{E}$ of additional edges has the property that it does not connect nodes of arbitrary length in $T$, i.e.,
\[
		\sup\{ d_T (v,w) \,|\, (v,w)\in \tilde{E} \} < \infty.
	\]
\end{definition}

We come to the definition of a mixing embedding of a graph. Here it is worthy to mention that $-$ especially in the context of graph theory $-$ there are different definitions of graph embeddings: the common definition of an embedding of a graph $G$ requires, loosely speaking, that the edges of the embedded graph may only intersect at their endpoints, i.e., at the nodes. It is well known that any graph with countably many nodes can be embedded into $\Z^3$ via placing the $i$-th node at the point $(i, i^2, i^3)\in\Z^3$, compare \cite{cohen1994three}. Furthermore, one can characterize the finite graphs which are embeddable into the plane (the planar graphs) with the help of the theorems of \cite{kuratowski1930probleme} and of \cite{wagner1937eigenschaft}. Here, we slightly change this graph theoretic definition such that it is tailored to our needs: we can omit the restriction that edges may not intersect at an interior point. However, since we shall usually be dealing with infinite graphs, we have to add a requirement that is essential when is comes to mixing random fields which are defined on the graph which is to be embedded.  We need this definition to show what is intuitively clear: the Bernstein inequalities for regular lattices are not applicable in the context of graphs which grow at an exponential rate. We give the definition

\begin{definition}[Mixing embedding of a graph]\label{embeddableGraph}
Let $G=(V,E)$ be a graph with countably many nodes $V$ and denote by $d_{p,\Z^N}$ the Euclidean $p$-norm on the $N$-dimensional lattice $\Z^N$, for $p\ge 1$ and $N \in \N_+$. There is a mixing embedding of $G$ in $\Z^N$ if there is a dimension $N \in \N_+$ such that $G$ is isomorphic to a graph $G'=(V',E')$ with $V' \subseteq \Z^N$ and for each sequence $( (v_i, w_i): i\in \N ) \subseteq V \times V$ with image $( (v'_i, w'_i): i\in \N) \subseteq V' \times V' $ it is true that
\[
		\sup \{ d_G(v_i,w_i): i\in \N \} < \infty \Longrightarrow 	\sup \{ d_{\infty,\Z^N} (v'_i,w'_i): i\in \N \} < \infty.
\]
\end{definition}

In the following, when speaking of the lattice $\Z^N$ as a graph, we shall always understand the graph $G=(V,E)$ with nodes $V = \Z^N$ and edges $E = \{ (v,v+b_i): v \in V, \; i=1,\ldots,N \}$ where $b_i$ is the $i$-th standard basis vector which is one in the $i$-th coordinate and zero otherwise. Note that in this case, we have $d_G \equiv d_{1,\Z^N}$ and $d_{\infty,\Z^N} \le d_{1,\Z^N} \le N d_{\infty,\Z^N}$. We have a practical lemma which gives equivalent formulations of this definition

\begin{lemma}\label{EquivCondEmb}
Let $G$ be a graph. Then the following are equivalent
\begin{enumerate}
\item There is a mixing embedding of $G$ in $\Z^N$
\item $G$ is isomorphic to a graph $G'=(V',E')$ with nodes $V' \subseteq \Z^N$ and there is a constant $0< C < \infty$ such that for any $(v, w) \in V \times V$ with image $(v', w')\in V' \times V'$ it is true that $d_{\infty,\Z^N} (v',w') \le C\, d_G (v,w)$.
\item $G$ is isomorphic to a graph $G'=(V',E')$ with nodes $V' \subseteq \Z^N$ and 
$$\sup \{ d_{ \infty, \Z^N} (v', w') :\, v \in V, w\in \cN(v), v \text{ (resp. } w) \text{ is isomorphic to } v' \text{ (resp.} w') \} < \infty. $$
\end{enumerate}
In particular, let $\{Z_v: v\in V\}$ be a random field on $G$, denote by $\{Z_s: s\in V'\}$ the same random field under the graph isomorphism. Then the mixing coefficients satisfy asymptotically	$\alpha_{\infty,\Z^N}	(\ceil*{C\,\cdot\,}) \le \alpha_{G}$ which means that strong mixing is inherited when switching between $G$ and $G'$.
\end{lemma}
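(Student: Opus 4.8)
The plan is to prove the chain of implications $(1)\Rightarrow(2)\Rightarrow(3)\Rightarrow(1)$, and then to deduce the mixing-inheritance statement from condition $(2)$. Throughout I fix the graph isomorphism $\phi\colon V\to V'$ and write $v'=\phi(v)$, noting that $d_G(v,w)=d_{G'}(v',w')$ since $\phi$ is a graph isomorphism; the content of the lemma is to compare $d_{G'}$ with the ambient lattice distance $d_{\infty,\Z^N}$.

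For $(1)\Rightarrow(2)$ I would argue by contraposition. If no constant $C$ as in $(2)$ exists, then for each $i\in\N$ there is a pair $(v_i,w_i)$ with $d_{\infty,\Z^N}(v_i',w_i') > i\, d_G(v_i,w_i)$. This already forces $d_{\infty,\Z^N}(v_i',w_i')\to\infty$. If moreover $\sup_i d_G(v_i,w_i)<\infty$, this sequence directly violates the implication in Definition~\ref{embeddableGraph}. The one subtlety is that a priori $d_G(v_i,w_i)$ could grow; but since $d_G$ takes values in $\N$ and the ratio blows up, one can extract a subsequence along which $d_G(v_i,w_i)$ is constant (hence bounded) while $d_{\infty,\Z^N}(v_i',w_i')$ still diverges, contradicting $(1)$. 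The implication $(2)\Rightarrow(3)$ is immediate: for a node $v$ and a neighbor $w\in\cN(v)$ we have $d_G(v,w)=1$, so $(2)$ gives $d_{\infty,\Z^N}(v',w')\le C$, and the supremum over all edges is thus bounded by $C$.

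The implication $(3)\Rightarrow(1)$ is the step I expect to be the main obstacle, because it requires upgrading a bound on edges to a bound on arbitrary pairs, and then deducing the embedding property. Suppose the edge-supremum in $(3)$ equals some finite $C_0$. Given any pair $(v,w)$ with $d_G(v,w)=\ell<\infty$, choose a geodesic path $v=v_0,v_1,\ldots,v_\ell=w$ in $G$, each consecutive pair being an edge. Applying $(3)$ to each edge and the triangle inequality for $d_{\infty,\Z^N}$ gives $d_{\infty,\Z^N}(v',w')\le\sum_{m=1}^{\ell}d_{\infty,\Z^N}(v_{m-1}',v_m')\le C_0\,\ell = C_0\, d_G(v,w)$, which is precisely condition $(2)$; and $(2)$ trivially yields $(1)$, since $\sup_i d_G(v_i,w_i)<\infty$ then forces $\sup_i d_{\infty,\Z^N}(v_i',w_i')\le C_0\sup_i d_G(v_i,w_i)<\infty$. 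The delicate point here is that $(3)$ is stated only for edges, so one must verify that a finite-length geodesic in $G$ maps to a path in $G'$ whose consecutive nodes are genuinely neighbors in $G'$ — this is exactly what the isomorphism $\phi$ guarantees.

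Finally, for the mixing statement I would use condition $(2)$ with its constant $C$. For sets $I',J'\subseteq V'$ with $d_{\infty,\Z^N}(I',J')\ge \lceil C\,n\rceil$, the estimate $d_{\infty,\Z^N}(v',w')\le C\,d_G(v,w)$ implies that the preimages $I=\phi^{-1}(I')$, $J=\phi^{-1}(J')$ satisfy $d_G(I,J)\ge n$. Since the generated $\sigma$-algebras $\cF(I'),\cF(J')$ coincide with $\cF(I),\cF(J)$ under the isomorphism, taking the supremum in the definition of $\alpha_G$ over a set that contains all the pairs relevant to $\alpha_{\infty,\Z^N}(\lceil C\,n\rceil)$ yields $\alpha_{\infty,\Z^N}(\lceil C\,n\rceil)\le\alpha_G(n)$ for all $n$, which is the claimed asymptotic domination $\alpha_{\infty,\Z^N}(\lceil C\,\cdot\,\rceil)\le\alpha_G$.
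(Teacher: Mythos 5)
Your steps $(2)\Rightarrow(3)$, $(3)\Rightarrow(1)$ (via the geodesic/triangle-inequality upgrade of the edge bound to a global bound) and the mixing computation are all correct, and they match the paper's argument in substance. The problem is your step $(1)\Rightarrow(2)$: the claim that, because $d_G$ is $\N$-valued and the ratio $d_{\infty,\Z^N}(v_i',w_i')/d_G(v_i,w_i)$ blows up, ``one can extract a subsequence along which $d_G(v_i,w_i)$ is constant'' is false. Ratio blow-up is perfectly compatible with $d_G(v_i,w_i)\to\infty$ --- nothing in the failure of $(2)$ rules out that every witnessing sequence has, say, $d_G(v_i,w_i)=i$ and $d_{\infty,\Z^N}(v_i',w_i')=i^3$ --- and in that case the integer sequence $\big(d_G(v_i,w_i)\big)_i$ has no bounded (hence no constant) subsequence, so Definition~\ref{embeddableGraph} gives no information about it whatsoever. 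Pigeonhole produces a constant subsequence only from a \emph{bounded} integer sequence, which is exactly what may fail here; so as written, $(1)\Rightarrow(2)$ has a genuine gap and the cycle $(1)\Rightarrow(2)\Rightarrow(3)\Rightarrow(1)$ does not close.

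The repair uses only tools you already have, but in a different order: prove $(1)\Rightarrow(3)$ first. The adjacent pairs of $G$ form a countable family all at $d_G$-distance $1$, so enumerating them as a sequence and applying the implication in Definition~\ref{embeddableGraph} shows that the edge supremum $C_0 \coloneqq \sup\{ d_{\infty,\Z^N}(v',w'):\, w\in\cN(v)\}$ is finite; then your own geodesic argument gives $(3)\Rightarrow(2)$ (pairs in different components are harmless since $d_G=\infty$ there), and $(2)\Rightarrow(1)$ is trivial, closing the chain as $(1)\Rightarrow(3)\Rightarrow(2)\Rightarrow(1)$. This is precisely the paper's route: it defines $C$ as the supremum of $d_{\infty,\Z^N}$ over pairs at graph distance one, notes it is finite by the embedding property, and extends the bound to arbitrary pairs along geodesics. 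Your derivation of $\alpha_{\infty,\Z^N}(\ceil*{C\,n})\le\alpha_G(n)$ from $(2)$ is fine and is essentially the computation in the paper.
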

\begin{proof}
(1) $\Rightarrow$ (2) and (3): assume that there is a mixing embedding of $G$ in $\Z^N$, then obviously $V$ is countable, thus, the number
\[
		C \coloneqq \sup \{ d_{\infty,\Z^N}(v,w): d_G(v,w) = 1, v,w \in V \} 
\]
is meaningful and finite by assumption. Consequently, we have for two connected nodes $v$ and $w$ in $V$ that $d_{\infty,\Z^N}(v,w) \le C \, d_G(v,w)$. If $v$ and $w$ are not connected then $d_G(v,w) = \infty$. Hence, $C$ is the proper constant. The converse inclusions (2) (resp. (3)) $\Rightarrow$ (1) are immediate.\\
We come to the amendment of the lemma. Let $n\in \N$ be given and consider a random field on $G$ and its graph-isomorphic counterpart on $G'$. We infer for two sets $I',J' \subseteq V'$ with preimage $I,J \subseteq V$ and $d_{\infty,\Z^N} (I',J') \ge n$ that $C\, d_G( I, J) \ge d_{\infty,\Z^N}(I',J') \ge n$, i.e., we have using the graph isomorphism for $n\ge C$
\[
		\{ (I',J'):\, I',J' \subseteq V' \text{ and } d_{\infty,\Z^N}(I',J') \ge n \} \subseteq \{ (I,J):\, I,J \subseteq V \text{ and } d_G(I,J) \ge C^{-1}\,n \}.
\]
Thus, $\alpha_{\infty,\Z^N}(n) \le \alpha_G \left( \floor*{C^{-1}\,n} \right)$ for $n\ge C$. This means that asymptotically $\alpha_{\infty,\Z^N}\le \alpha_G\left(\floor*{C^{-1}\,\cdot\,} \right)$ or rather $\alpha_{\infty,\Z^N}\left(\ceil*{C\,\cdot\,} \right)\le \alpha_G$. 
\end{proof}

The following class of graphs does not allow for a mixing embedding in $\Z^N$
\begin{proposition}
Let $G=(V,E)$ be a graph with root $v_0 \in V$. Put $L_0 \coloneqq \{v_0\}$ and recursively
\[
	L_k \coloneqq \left\{ v \in V\setminus \cup_{i=0}^{k-1} L_i \;|\; \exists w \in L_{k-1} \text{ with } w \in \cN(v) \right\}
	\]
for $k\in \N_+$. If the map $\N \ni k \mapsto |L_k|$ grows faster than any polynomial function of degree $N$ defined on $\N$, there is no mixing embedding of $G$ in $\Z^N$.
\end{proposition}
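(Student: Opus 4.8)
The plan is to argue by contradiction: from the existence of a mixing embedding I would extract a polynomial bound on $|L_k|$ that is incompatible with the assumed super-polynomial growth, and then invoke Lemma~\ref{EquivCondEmb} to make the embedding usable.

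First I would record the geometric meaning of the sets $L_k$, namely that they are the distance spheres around the root. The levels are pairwise disjoint by construction, and a short induction on $k$ shows that every $v \in L_k$ satisfies $d_G(v_0,v) = k$: such a $v$ is adjacent to some node of $L_{k-1}$, which sits at distance $k-1$ by the inductive hypothesis, while $v \notin \cup_{i<k} L_i$ forces the distance to be exactly $k$. The only fact I actually need downstream is the easy upper bound $d_G(v_0,v) \le k$ for every $v \in \cup_{i=0}^{k} L_i$.

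Next, assuming for contradiction that a mixing embedding of $G$ in $\Z^N$ exists, I would apply Lemma~\ref{EquivCondEmb}, part (2), to obtain an isomorphic copy $G'=(V',E')$ with $V' \subseteq \Z^N$ together with a finite constant $C>0$ satisfying $d_{\infty,\Z^N}(v',w') \le C\, d_G(v,w)$ for all node pairs and their images. Writing $v_0'$ for the image of the root (a lattice point, hence with integer coordinates), every image of a node in $\cup_{i=0}^{k} L_i$ then lies in the $\ell_\infty$-ball of radius $Ck$ centered at $v_0'$.

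The final step is a counting argument. That ball contains at most $(2Ck+1)^N$ lattice points of $\Z^N$, and since a graph isomorphism is injective on nodes, distinct nodes receive distinct images. Combined with the disjointness of the levels this yields
\[
	|L_k| \le \sum_{i=0}^{k} |L_i| = \Big| \bigcup_{i=0}^{k} L_i \Big| \le (2Ck+1)^N,
\]
whose right-hand side is a polynomial in $k$ of degree $N$. This contradicts the hypothesis that $k \mapsto |L_k|$ outgrows every degree-$N$ polynomial, so no mixing embedding can exist. I expect the main (and only mild) obstacle to be the bookkeeping that identifies $L_k$ with the distance sphere and pins the lattice-point count to a clean degree-$N$ polynomial; once those are in place the contradiction is immediate.
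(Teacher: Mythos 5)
Your proof is correct, and it takes a cleaner route than the paper's, although both arguments rest on the same two ingredients: the Lipschitz bound $d_{\infty,\Z^N}(v',w') \le C\, d_G(v,w)$ from Lemma~\ref{EquivCondEmb}(2), and the count of at most $(2r+1)^N$ lattice points in an $\ell_\infty$-ball of radius $r$. The difference is in how the contradiction is extracted. You center the ball at the image of the root, use $d_G(v_0,v)\le k$ for $v\in\cup_{i\le k}L_i$, and conclude directly that $|L_k| \le \bigl|\cup_{i\le k}L_i\bigr| \le (2Ck+1)^N$, a single degree-$N$ polynomial bound that kills the super-polynomial growth hypothesis in one stroke; in effect you prove the stronger and more natural statement that a mixing embedding in $\Z^N$ forces polynomial (degree-$N$) ball growth. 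The paper instead never looks at the root's image: it works with pairwise distances \emph{inside} a single sphere $L_k$ (using $d_G(v,w)\le 2k$ for $v,w\in L_k$, which rests on the same BFS observation you prove), and applies the growth hypothesis twice, first with the polynomial $(2k+1)^N$ to force a pair with $d_{\infty,\Z^N}(v',w')>2k$ and hence $C>1$, then with $(2C^2k+1)^N$ to force $C<1$, squeezing the embedding constant into a contradiction. Your version uses the hypothesis only once, avoids the two-sided squeeze on $C$, and is the argument I would recommend; the only bookkeeping it needs beyond the paper's is the (easy) observation that the levels are disjoint and lie within graph distance $k$ of the root, which you supply by induction.
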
     
\begin{proof}
Let the map $\N\ni k \mapsto |L_k|$ grow faster than any polynomial of degree $N$ and assume that there is a mixing embedding of $G$ in $\Z^N$ for some $0<C<\infty$ which satisfies $d_{\infty,\Z^N}(v,w) \le C d_G(v,w)$ as stated in Lemma~\ref{EquivCondEmb}. First, observe that for $v$ and $w$ both in $L_k$ the distance in the graph is at most $d_G(v,w)\le 2k$.  By assumption there is a $k_1 \in \N$ such that for all $k\ge k_1$ we have $|L_k| > (2k+1)^N$. Thus, for $k\ge k_1$ there are $v,w\in L_k$ with the property that $d_{\infty,\Z^N}(v,w) > 2k$ which implies for these two nodes that
\[
			2k < d_{\infty,\Z^N}(v,w) \le C d_G(v,w) \le 2 C k.
\]
Hence, $C>1$. In the same way, there is a $k_2 \in \N$ such that for all $k \ge k_2$, we have $|L_k| > (2C^2 k+1)^N$. In particular, there are $(v,w) \in L_k$, $k\ge k_2$ with the property that $d_{\infty,\Z^N} (v,w) > 2C^2 k$ which implies for $k \ge \max( k_1, k_2)$
\[
	2C^2k < d_{\infty,\Z^N}(v,w) \le C d_G(v,w) \le 2  C k;
\]
which in turn implies $C <1$. This contradicts the assumption that there is a mixing embedding of $G$ in $\Z^N$.
\end{proof}

This implies that we cannot use the above mentioned Bernstein inequalities for data which is defined on a lattice to derive concentration inequalities for random fields that are defined on graphs which grow at an exponential rate $A$. Instead we give a new Bernstein inequality which can deal with this class of random fields in the next section.

\section{A Bernstein inequality for exponentially growing graphs}\label{Bernstein}
In this section we derive inequalities of the Bernstein type for random fields which are highly-connected and whose index set grows at an exponential rate. We need the following important lemma:

\begin{lemma}\label{TreeNumberOfPairs}
Let $T=(V,E)$ be a tree growing at an exponential rate $A$. Denote by
\begin{align}\label{EqTreeNumberOfPairs0a}
		V(j,k,P) \coloneqq \left\{	(j',k')\in V|\, j\le j' \le j+P-1,\, A^{j'-j}(k-1) + 1 \le k' \le A^{j'-j} k		\right\} 
\end{align}
the set of nodes of the subtree of $T$ which has its root at the node $(j,k)$ and consists of $P \in \N_+$ generations. Consider the graph which is induced by the set of nodes $V(j,k,P)$. Then the number of pairs $(v,w)$ in this graph which are separated by exactly $L$ edges for $1\le L \le 2(P-1)$ is given by
\begin{align} 
	N(P,L)\coloneqq	&\sum_{h = 0}^{ \floor*{P-1-L/2}} A^h \sum_{i = 1 \vee ( L - (P-1-h) ) }^{L \wedge (P-1-h) } A^{i} \left( 	2\cdot 1_{\{ L = i\} } + (A-1) A^{L-i-1} \1{ L > i  }	\right) \label{EqTreeNumberOfPairs0b}\\
		&= 2 \frac{ A^P - A^L}{A-1} 1_{\{ L \le P-1\} } + (L-1) \left( A^P - A^{L-1} \right) \1{ L \le P  }  \nonumber\\
		&\quad +  \big(	2(P-1) - L + 1 \big) \left( A^{P-1 + \floor*{L/2}} - A^{ L-1 \vee P } \right) \,\cdot\, \1{ L \ge 4	} \nonumber\\
		&\quad  - 2 \frac{ \big\{ \, (A-1) (P-1-\ceil*{L/2} ) - 1 \, \big \}\, A^{ P - 1 + \floor*{L/2}} + A^{L} }{A - 1} \,\cdot\, \1{ L \ge 4 } \nonumber \\
		&\quad  + 2 \frac{ \left\{ (A-1)(P-L) -1	\right\}A^P + A^L}{A-1} \1{4 \le L < P } \nonumber \\
		&\le C\,P\,A^{P + L/2}, \nonumber
\end{align}
for a suitable constant $0 < C <\infty$ which does not depend on $P$, $L$ and $A$.
\end{lemma}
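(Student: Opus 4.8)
The plan is to count the pairs by classifying them according to their lowest common ancestor in the tree. Since $T$ is a tree, any two distinct nodes $v,w$ of the induced subgraph on $V(j,k,P)$ have a unique lowest common ancestor $u$, and if $u$ sits $h$ generations below the subtree root $(j,k)$ while $v$ and $w$ sit $a$ and $b$ generations below $u$ respectively, then $d_T(v,w)=a+b$. So I would first group the pairs at distance $L$ by the depth $h$ of their ancestor and by the branch lengths $(a,b)$ with $a+b=L$; the prefactor $A^h$ in \eqref{EqTreeNumberOfPairs0b} simply counts the $A^h$ candidate ancestors at depth $h$. For a fixed ancestor $u$ and a fixed split with $a,b\ge 1$ the two nodes must lie in two \emph{distinct} child-subtrees of $u$, which yields $A(A-1)\,A^{a-1}A^{b-1}=(A-1)A^{L-1}$ ordered pairs; this is exactly the generic summand $A^{i}(A-1)A^{L-i-1}$ once one sets $i=a$. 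The degenerate splits $(a,b)\in\{(0,L),(L,0)\}$, in which one node coincides with $u$, contribute $A^{L}$ ordered pairs each and are collected into the single boundary summand $2\,A^{i}\,\1{L=i}$. Finally, the requirement that both nodes fit inside the $P$ generations reads $a,b\le P-1-h$, which is precisely the range $1\vee(L-(P-1-h))\le i\le L\wedge(P-1-h)$ of the inner sum, while nonemptiness of this range (the balanced split needs $\ceil*{L/2}\le P-1-h$) pins down the outer upper limit $\floor*{P-1-L/2}$. This establishes the first equality in \eqref{EqTreeNumberOfPairs0b}.

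For the closed form I would substitute $m\coloneqq P-1-h$ and evaluate the inner sum explicitly: the generic part equals $(A-1)A^{L-1}$ times the number of admissible indices, namely $\big((L-1)\wedge m\big)-\big(1\vee(L-m)\big)+1$, while the boundary part contributes $2A^{L}$ exactly when $L\le m$. This forces a split of the range of $h$ (equivalently of $m$) into the regime $m\ge L$, where the admissible count is the constant $L-1$ and the $2A^{L}$ term is present, and the regime $\ceil*{L/2}\le m\le L-1$, where the count becomes the linear expression $2m-L+1$ and the boundary term is absent. Summing $A^{h}$ against a constant over the first regime is a plain geometric series, whereas the second regime produces an arithmetic--geometric series $\sum_{h}A^{h}\big(2(P-1-h)-L+1\big)$; carrying out these evaluations and reassembling yields the displayed closed form, with the indicators $\1{L\le P-1}$, $\1{L\le P}$, $\1{L\ge 4}$ and $\1{4\le L<P}$ recording the boundary cases in which one regime is empty or degenerates (for small $L$ the linear regime collapses to a single term, and for $L$ close to $2(P-1)$ the geometric regime disappears). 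I expect this bookkeeping to be the main obstacle: keeping the floor/ceiling terms $\floor*{L/2}$, $\ceil*{L/2}$ consistent and correctly matching boundary cases to the right indicator is purely computational but error-prone.

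The final inequality does not require the closed form and can be read off the double sum directly, which is the route I would actually take for the bound. The inner sum consists of at most $L-1$ generic summands, each equal to $(A-1)A^{L-1}$, together with the single boundary summand $2A^{L}$, hence is bounded by $(L+2)A^{L}\le C\,L\,A^{L}$ for an absolute constant $C$. Summing the geometric factor $A^{h}$ over $0\le h\le P-1-\ceil*{L/2}$ gives at most $A^{P-\ceil*{L/2}}/(A-1)$, and since $L-\ceil*{L/2}=\floor*{L/2}\le L/2$ this produces $N(P,L)\le C\,L\,A^{P+\floor*{L/2}}\le C\,L\,A^{P+L/2}$. Using $L\le 2(P-1)\le 2P$ to absorb the factor $L$ into $P$ then yields $N(P,L)\le C'\,P\,A^{P+L/2}$ with a constant independent of $P$, $L$ and $A$ (the case $A=1$, where $N(P,L)\le P$, being immediate), as claimed.
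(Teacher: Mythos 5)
Your proposal is correct and its core is exactly the paper's argument: the paper likewise counts ordered pairs by fixing the generation $h$ of the lowest common ancestor (its ``common parent'' $r$) and the branch length $i$ from $r$ to the first node, with the ancestor--descendant pairs producing the $2\cdot 1_{\{L=i\}}$ term and the pairs in distinct child-subtrees producing the $A^{i}(A-1)A^{L-i-1}$ term, and with the same range restrictions yielding the limits of the double sum. You in fact go slightly beyond the paper's own proof, which stops once the double sum is justified: your direct derivation of the bound $N(P,L)\le C\,P\,A^{P+L/2}$ from the double sum (inner sum at most $(L+2)A^{L}$, outer geometric sum at most $A^{P-\lceil L/2\rceil}/(A-1)$), together with your regime-splitting sketch for the closed form, supplies verifications that the paper states in the lemma but never writes out.
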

\begin{proof}[Proof of Lemma~\ref{TreeNumberOfPairs}]
The minimal distance in this subtree clearly is 1, whereas the maximal distance is $2(P-1)$. Let now a length $L$ be fixed, $1\le L \le 2(P-1)$. We distinguish two cases for a pair $(v,w)$ which is separated by $L$ edges: in the first case $w$ (resp. $v$) is a descendant of $v$ (resp. $w$). In the second case $v$ and $w$ have a common parent which we call $r$ and, plainly, $v\neq r \neq w$.\\
The first case is only possible for $1 \le L \le P-1$, for such an $L$ there are exactly $2 ( A^P-A^L )/(A-1)$ such pairs $(v,w)$ in this subtree. The second case is possible for $2 \le L \le 2(P-1)$. Depending on $L$ the parent is located between generation zero and generation $\ceil*{ P-1-L/2}$, denote its generation by $h$. Having fixed a parent $r$ in generation $h$ the distance from $r$ to the first node $v$ is at least $1 \vee (l-(P-1-h))$ and at most $L \wedge (P-1-h)$, denote this distance by $i$. Hence, there are exactly $A^{i}$ nodes in question for $v$. In this case that $i < L$ the node $w$ is separated $L-i$ generations from $r$. Since $v\neq w$ and their graph distance is $L$, this yields $(A-1) A^{L-i-1}$ possibilities for $w$. All in all, we give the number of pairs with the formula from equation~\eqref{EqTreeNumberOfPairs0b}.
\end{proof}

It follows the Bernstein inequality. Here we do not consider the full set of nodes $V$ instead we focus on a strip of $V$ which is defined with the help of the $V(j,k,P)$ from the previous Lemma~\ref{TreeNumberOfPairs}.
\begin{theorem}[Bernstein inequality]\label{BernsteinTreeStrip}
Let $T=(V,E)$ be a tree growing at an exponential rate $A$. Let $Z_v$ be a real-valued random variable for each $v\in V$  with $\E{Z_v} = 0$, $\norm{Z_v}_{\infty} \le C$ and $\text{Var}(Z_v) \le \sigma^2$, for some $ 0 < \sigma, C < \infty$. Let $L\in \N$, $P \in \N_+$ and consider the subtree induced by the set of nodes
\begin{align}\label{EqBernsteinTreeStrip0a}
		V' \coloneqq V(L,1,P) \cup \ldots \cup V(L,A^L,P)
\end{align}
with $V(L,i,P)$ as in the definition given in \eqref{EqTreeNumberOfPairs0a}. Then
\begin{align}\begin{split} \label{EqBernsteinTreeStrip0b}
		\p\left( \left| \sum_{v \in V'} Z_v \right| > \epsilon  \right) 
		& \le 2 e^{-\beta\epsilon} \exp\left\{ 10 \sqrt{e} \alpha_T( f)^{ (P_2 + Q_2) / \left(2P_2 + 2Q_2 + A^L \right) } \frac{A^L}{P_2 + Q_2 } \right\}  \\
		&\quad \cdot \exp\left\{ 4 \beta^2 e\, (P_2)^2 \left(\frac{ A^P-1}{A-1} \sigma^2 + 4 C^2  \sum_{k=1}^{2(P-1)} \alpha_T(k)\, N(P,k) \right) \left( \frac{A^L}{P_2 + Q_2} + 1		\right) \right\}
\end{split} \end{align}
where $Q_2, P_2 \in \N_+$ such that $Q_2 \le P_2$ and $P_2 + Q_2 <  A^L$ as well as
 $$\beta \le \frac{A-1}{4e C P_2 (A^P-1) } \text{ and } f \coloneqq 2 \ceil*{ \frac{ \log Q_2}{\log A }}. $$
\end{theorem}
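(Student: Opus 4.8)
The plan is to combine the exponential Markov inequality with a big-block/small-block decomposition adapted to the tree, to decouple the blocks through the mixing coefficient by means of Davydov's inequality (Appendix~\ref{Appendix}), and to bound the moment generating function of a single block through its variance. First I would reduce to a one-sided bound: applying the union bound to $\pm S$ with $S \coloneqq \sum_{v\in V'} Z_v$ and using $\p(S>\epsilon)\le e^{-\beta\epsilon}\,\E{e^{\beta S}}$ produces the prefactor $2e^{-\beta\epsilon}$, so it remains to estimate $\E{e^{\beta S}}$. Since $V'$ is the disjoint union of the $A^L$ subtrees $V(L,i,P)$, each carrying the partial sum $Y_i\coloneqq\sum_{v\in V(L,i,P)}Z_v$, I would group the index set $\{1,\ldots,A^L\}$ into alternating big blocks of $P_2$ consecutive subtrees and small blocks of $Q_2$ consecutive subtrees, obtaining $M\approx A^L/(P_2+Q_2)$ big-block sums $U_1,\ldots,U_M$, where each $U_m$ is a sum of $P_2$ of the $Y_i$.

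The role of the small blocks is purely that of buffers. Two subtrees $V(L,k,P)$ and $V(L,k',P)$ are closest at their roots $(L,k)$, $(L,k')$, whose tree distance equals twice the number of generations up to their common ancestor. A separating gap of $Q_2$ subtrees forces that common ancestor to lie at least $\ceil*{\log Q_2/\log A}$ generations above, so consecutive big blocks are separated by tree distance at least $f=2\ceil*{\log Q_2/\log A}$; hence the dependence between big blocks is governed by $\alpha_T(f)$. This is exactly the geometric input that distinguishes the tree from the lattice, and it is what forces the logarithmic choice of $f$.

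Next I would approximate $\E{e^{\beta S}}$ by the product $\prod_m\E{e^{\beta U_m}}$. Writing the difference as a telescoping sum and estimating each increment with Davydov's covariance inequality applied to $e^{\beta U_m}$ against the product over the remaining blocks, using the uniform bound $\norm{U_m}_\infty\le CP_2(A^P-1)/(A-1)$ and the separation $f$, yields the factor $\exp\{10\sqrt e\,\alpha_T(f)^{(P_2+Q_2)/(2P_2+2Q_2+A^L)}\,A^L/(P_2+Q_2)\}$, the summation over the $\approx A^L/(P_2+Q_2)$ blocks accounting for the leading factor and the exponent on $\alpha_T(f)$ arising from the Hölder exponents in Davydov balanced against the block sizes. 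For each single block I would then use $\E{e^{\beta U_m}}\le\exp(c\,\beta^2\,\text{Var}(U_m))$, which is legitimate because the constraint $\beta\le(A-1)/(4eCP_2(A^P-1))$ makes $\beta\norm{U_m}_\infty$ small. Here Cauchy--Schwarz over the $P_2$ subtrees gives $\text{Var}(U_m)=\norm{\sum_i Y_i}_2^2\le(\sum_i\norm{Y_i}_2)^2\le(P_2)^2\max_i\text{Var}(Y_i)$, which explains the factor $(P_2)^2$; and for a single subtree, bounding covariances via $|\text{Cov}(Z_v,Z_w)|\le 4C^2\alpha_T(d_T(v,w))$ together with the pair count $N(P,k)$ of Lemma~\ref{TreeNumberOfPairs} gives $\text{Var}(Y_i)\le\frac{A^P-1}{A-1}\sigma^2+4C^2\sum_{k=1}^{2(P-1)}\alpha_T(k)N(P,k)$. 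Multiplying the $M\approx A^L/(P_2+Q_2)$ single-block bounds produces the second exponential, the $+1$ absorbing the rounding in the number of blocks, after which I would retain $\beta$ in the admissible range.

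The hard part will be the decoupling: controlling the accumulated error across all $M$ blocks without an exponential blow-up in $M$, and extracting precisely the exponent $(P_2+Q_2)/(2P_2+2Q_2+A^L)$ on $\alpha_T(f)$ from the Hölder/Davydov estimate. The tree geometry also makes the separation bound—and hence the exact value of $f$—more delicate than in the lattice case, and the bookkeeping of the buffer (small-block) contributions to $S$, which since $Q_2\le P_2$ must be absorbed into the big-block estimates, has to be carried out so that only $P_2$ enters both the variance term and the constraint on $\beta$.
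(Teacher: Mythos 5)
Your proposal follows essentially the same architecture as the paper's proof: exponential Markov inequality, decomposition of the strip $V'$ into alternating big blocks of $P_2$ and small blocks of $Q_2$ consecutive subtrees, the geometric observation that a gap of $Q_2$ subtrees forces tree distance at least $f = 2\ceil*{\log Q_2/\log A}$ between big blocks (your common-ancestor argument is exactly the justification the paper leaves implicit), a recursive/telescoping Davydov estimate whose H{\"o}lder exponents ($a = T+1$, $b=1+1/T$ with $T = \ceil*{A^L/(P_2+Q_2)}$) produce the exponent $(P_2+Q_2)/(2P_2+2Q_2+A^L)$ on $\alpha_T(f)$, and the per-block bound $\E{e^{\delta J}} \le \exp\left(\delta^2 \E{J^2}\right)$ with $\E{J^2} \le (P_2)^2\left( \frac{A^P-1}{A-1}\sigma^2 + 4C^2\sum_{k}\alpha_T(k)N(P,k)\right)$ via Cauchy--Schwarz over the $P_2$ subtrees and the pair count of Lemma~\ref{TreeNumberOfPairs}. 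All of this matches the paper.

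There is, however, one genuine gap, which you flagged yourself as unresolved ``bookkeeping'' but which is not a mere bookkeeping issue: the contribution of the small blocks to $S$. Writing $S = S_1 + S_2$ with $S_1$ the big-block sum and $S_2$ the small-block sum, your plan to approximate $\E{e^{\beta S}}$ by $\prod_m \E{e^{\beta U_m}}$ simply drops $S_2$, and the proposed fix --- absorbing the small blocks into the big-block estimates --- cannot work: if a small block is merged into an adjacent big block, consecutive merged blocks become adjacent (their extreme subtrees have roots at tree distance as small as $2$), destroying precisely the separation $f$ on which the Davydov decoupling rests. The paper's resolution is a one-line device you are missing: by the AM--GM inequality, $e^{\beta(S_1+S_2)} \le \frac{1}{2}\left( e^{2\beta S_1} + e^{2\beta S_2}\right)$, so \emph{both} block families receive the full recursive treatment, each with parameter $\delta = 2\beta$. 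The hypothesis $Q_2 \le P_2$ is exactly what guarantees that the small-block family is buffered by gaps of $P_2 \ge Q_2$ subtrees and hence enjoys at least the same separation $f$; and $\delta = 2\beta$ is the origin of the factor $4\beta^2$ in \eqref{EqBernsteinTreeStrip0b} as well as the factor $4e$ in the constraint on $\beta$, neither of which your version (working with $\beta$ alone) would reproduce. With this splitting inserted, the rest of your argument goes through as in the paper.
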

\begin{proof}[Proof of Theorem~\ref{BernsteinTreeStrip}]
We have to partition $V'$ suitably. We use the abbreviations $\widetilde{V}(\,\cdot\,) \coloneqq V(L ,\,\cdot\, ,P)$ and $T \coloneqq \ceil*{  A^L /(P_2 + Q_2) }$ as well as,
\begin{align*}
		A(i) &\coloneqq \widetilde{V}\big( (i-1) (P_2 + Q_2) + 1\big) \cup \ldots \cup \widetilde{V}\big( (i-1)Q_2 + i P_2 \big) \\
		B(i) &\coloneqq \widetilde{V}\big( (i-1) Q_2 + i P_2 + 1 \big) \cup \ldots \cup \widetilde{V}\big( i (P_2 + Q_2) \big),
\end{align*}
for $i=1,\ldots,T$. Note that the $A(i)$ and $B(i)$ are the union of the disjoint sets $\widetilde{V}(\,\cdot\,)$ and that some $A(i)$ and $B(i)$ might be empty. Furthermore, we define
\[
	V'_1 \coloneqq \cup_{i=1}^T A(i) \text{ and } V'_2 \coloneqq \cup_{i=1}^T B(i).
	\]
Then, we have with Markov's inequality and the well-known AM-GM inequality that
 \[ \p\left( \sum_{v\in V'} Z_v > \epsilon \right)  \le \frac{e^{-\beta \epsilon} }{2} \left\{ \E{ e^{2\beta \sum_{v\in V'_1} Z_v } } + \E{ e^{ 2\beta \sum_{v\in V'_2 } Z_v }} \right\} \text{ for } \beta > 0. \]
Hence, it suffices to consider the sum $\sum_{v\in V'_1} Z_v$ closer. We write 
\[
		S(i) \coloneqq \sum_{v \in \cup_{i=1}^i A(i) } Z_v \text{ and } J(i) \coloneqq \sum_{v\in A(i) } Z_v  \text{ for } i = 1,\ldots,T.
\]
We compute the expectations of the random variables $e^{\delta S(i)}$, for $\delta > 0 $ sufficiently small. Note that the distance w.r.t.\ $d_G$ between $v \in A(i)$ and $v' \in A(i')$, $i\neq i'$, is at least $2\ceil*{\log Q_2 / \log A }$.  Since $S(i) = S(i-1) + J(i)$, we infer from Davydov's inequality given in Proposition~\ref{davydovsIneq} that
\begin{align}
	\E{ e^{\delta S(i) } } &= \text{Cov}( e^{\delta S(i-1)}, e^{\delta J(i) } ) + \E{ e^{\delta S(i-1)}} \E{ e^{\delta J(i) }} \nonumber \\
	& \le 10\, \alpha_T( f)^{1/a} \norm{ \exp( \delta S(i-1) ) }_{b} \norm{ \exp( \delta J(i) ) }_{\infty } + \E{ e^{\delta S(i-1)}} \E{ e^{\delta J(i) }}  \label{EqBernstein1}
\end{align}
for H{\"o}lder conjugate $a,b \ge 1$ and $f \coloneqq 2 \ceil*{  \log Q_2/ \log A}$. Furthermore, we have if $| \delta J(i)| \le \ 1/(2e)$ that 
$$\exp \delta J(i) \le 1 + \delta J(i) + \delta^2 J(i)^2.$$
Now the random variables $Z_v$ are essentially bounded by $C$. Let $\beta \le (A-1)/(4eC P_2 (A^P-1))$ and define $\delta \coloneqq 2\beta$. Then, we have
\[
		| \delta J(i)| \le \delta C P_2 \, \frac{A^P-1}{A-1} \le \frac{1}{2e}, \text{ hence, } \E{ \delta J(i) } \le \exp\left( \delta^2 \E{J(i)^2} \right).
\]
Note that in the subgraph induced by the $A(i)$ there are exactly $N(P,k)$ pairs of nodes $(v,w)$ with $d_G( v,w) = k \in \{1,\ldots,2(P-1) \}$, where $N(P,k)$ is given in Lemma \ref{TreeNumberOfPairs}. For the next two lines we use the inequality $\left(\sum_{i=1}^n a_i \right)^2 \le n^2 \sum_{i=1}^n a_i^2 $ for real numbers $a_i$, $i=1,\ldots,n$, $n\in\N$. Consequently, we get
\begin{align*}
		\E{ J(i)^2 } &= \sum_{v \in A(i)} \E{ Z_v^2 } + \sum_{ \substack{v,w\in A(i), \\ v\neq w} } \E{ Z_v Z_w } \\
		&\le (P_2)^2 \left\{	\frac{ A^P-1}{A-1} \sigma^2 + 4 C^2  \sum_{k=1}^{2(P-1)} \alpha_T(k) \,N(P,k)		\right\} =: K
\end{align*}
with Davydov's inequality from Proposition~\ref{davydovsIneq}.\\
Furthermore, we find with the H{\"o}lder inequality that $\norm{ \exp(\delta S(i-1) ) }_1 \le \norm{ \exp(\delta S(i-1) ) }_b$. Thus, equation \eqref{EqBernstein1} can be bounded by
\begin{align}
		\E{ \exp\left(\delta S(i) \right)} \le \left\{ 10\, \alpha_T(f)^{1/a} \norm{ \exp(\delta J(i) }_{\infty} + \exp( \delta^2 K  ) \right\} \norm{ \exp(\delta S(i-1) ) }_b. \label{EqBernstein2}
\end{align}
Especially, for the case $i=T$ successive iteration of \eqref{EqBernstein2} yields for the choice $a \coloneqq T+1$ and $b = 1+1/T$ (as in \cite{frankeBernstein})
\begin{align*}
	\E{ \exp\left( \delta S(T) \right) } \le \exp\{	10 \sqrt{e} \alpha_T(f)^{1/(T+1)} (T-1) + \delta^2 e K T \}.
\end{align*}
Next, since $\alpha_T(f) \le 1$ and $1/(1+T) \ge \frac{P_2+ Q_2}{2(P_2 + Q_2) + A^L}$, we arrive at
\begin{align*}
	\E{ \exp\left( 2\beta \sum_{ v\in V'_1} Z_v		\right) } &\le \exp \left\{	10\sqrt{e} \alpha_T(f)^{ (P_2 + Q_2)/(2(P_2+Q_2) + A^L)} \frac{ A^L}{ P_2 + Q_2}	\right\} \\
	&\quad \cdot \exp\left\{ 4 \beta^2 e (P_2)^2 \left[	\frac{A^P-1}{A-1} \sigma^2 + 4 C^2 \sum_{k=1}^{2(P-1)} \alpha_T(k) \, N(P,k)	\right]\left( \frac{A^L}{P_2  + Q_2} +1 \right) 	\right\}.
\end{align*}
The computations for $\E{ \exp\left( 2\beta \sum_{ v\in V'_2} Z_v		\right) }$ are similar and one achieves the same bounds for this term. This finishes the proof.
\end{proof}

We are now in position to derive a concentration inequality. We consider an infinite tree which grows at an exponential rate $A$ and which is endowed with a random field $Z$. We assume that the random field $Z$ on the tree $T$ is strong mixing such that 
\begin{align}\label{EqSummableMixing}
		\sum_{k\in \N} \alpha_T(k) N(P,k) \in \cO\left(	P A^P	 \right),
\end{align}
where $N(P,k)$ is defined in Lemma~\ref{TreeNumberOfPairs}. We say that the mixing coefficients decay at a \textit{super-exponential} (or \textit{hyper-exponential}) rate if there is a positive increasing function $g$ with $\lim_{n \rightarrow \infty} g(n) = \infty$ such that
\begin{align}
		\alpha_T( n) \le \exp( - n g(n) ). \label{EqSuperExpMixing}
\end{align}
In this case, equation~\eqref{EqSummableMixing} follows from Lemma~\ref{TreeNumberOfPairs} with the bound $N(P,k) \in \cO\left( P A^{P+k/2}\right)$ and the following concentration inequality is true

\begin{theorem}[Concentration inequality for exponentially growing trees]\label{ConcentrationTree}
Let $T=(V,E)$ be a tree growing at an exponential rate $A$ and let $Z$ be a random field on $T$ as in Theorem~\ref{BernsteinTreeStrip}. Let the random field be strong mixing w.r.t.\ the graph metric with $\alpha$-mixing coefficients which fulfill \eqref{EqSummableMixing}, e.g., the mixing coefficients decay at a super-exponential rate as in \eqref{EqSuperExpMixing}. Consider the subgraph which consists of the first $L$ generations of $T$ for $L\in \N$ 
\[
		V_L = \left\{ (j,k) \in V: 0 \le j \le L-1,\, 1 \le k \le A^{L-1} \right \}.
\]
Then there are constants $c_1, c_2 \in \R_+$ such that for all $L\in\N$ and $\epsilon > 0$
\begin{align*}
		\p\left( \frac{1}{|V_L|} \left|\sum_{v\in V_L} Z_v \right| > \epsilon	\right) \le c_1 \exp\left\{	- c_2\, \epsilon \frac{L}{\log L} \right\}.
\end{align*}
This means the probability decays asymptotically at a rate which is approximately linear in the size of the sample $V_L$.
\end{theorem}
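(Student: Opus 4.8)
The plan is to realise $V_L$ as the first $L$ generations of $T$: the constraint $1\le k\le A^{L-1}$ in the definition of $V_L$ is vacuous for $j\le L-1$, so $V_L=V(0,1,L)$ and $|V_L|=(A^L-1)/(A-1)$. I would not apply Theorem~\ref{BernsteinTreeStrip} to all of $V_L$ (its root region has too few subtrees for the block partition used in that proof, since there $P_2+Q_2<A^\ell$ is needed), but to a lower strip. Fixing a cut generation $\ell=\ell(L)<L$ and writing $V_\ell:=V(0,1,\ell)$ for the first $\ell$ generations, I split
\[
\sum_{v\in V_L}Z_v=\sum_{v\in V_\ell}Z_v+\sum_{v\in V'}Z_v,\qquad V':=V(\ell,1,L-\ell)\cup\ldots\cup V(\ell,A^\ell,L-\ell),
\]
where $V'$ is the strip of generations $\ell,\ldots,L-1$ and has exactly the shape required by Theorem~\ref{BernsteinTreeStrip} (its starting generation being $\ell$ and its thickness $P$ being $L-\ell$). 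The top block is controlled deterministically, $|\sum_{v\in V_\ell}Z_v|\le C|V_\ell|$, and since $|V_\ell|/|V_L|\asymp A^{\ell-L}$ its contribution to the normalised average is negligible once $\ell$ is well below $L$. Everything then reduces to bounding $\p(|\sum_{v\in V'}Z_v|>\epsilon|V_L|/2)$ through \eqref{EqBernsteinTreeStrip0b} and to choosing the free parameters $\ell,P_2,Q_2,\beta,f$ as functions of $L$.

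The choice is dictated by the leading factor $2e^{-\beta\epsilon}$. Inserting the threshold $\epsilon|V_L|/2$ and the maximal $\beta=(A-1)/(4eCP_2(A^{L-\ell}-1))$, and using $(A^L-1)/(A^{L-\ell}-1)\sim A^\ell$, the main exponent is of order $\epsilon A^\ell/P_2$. To reproduce the target rate $\epsilon L/\log L$ I would impose
\[
\frac{A^\ell}{P_2}\asymp\frac{L}{\log L},
\]
take $Q_2=P_2$ (so that $P_2+Q_2=2P_2<A^\ell$ holds) and let both grow with $L$, while keeping the thickness $p:=L-\ell$ a fixed positive fraction of $L$. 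The last requirement makes the variance term harmless: its bracket is $\cO(pA^p)$ by \eqref{EqSummableMixing} (equivalently by $N(P,k)\in\cO(PA^{P+k/2})$ from Lemma~\ref{TreeNumberOfPairs}), while $4\beta^2eP_2^2\asymp A^{-2p}$ and the trailing factor $A^\ell/(P_2+Q_2)+1\asymp L/\log L$; hence the whole variance term is of order $pA^{-p}\cdot L/\log L$, which tends to $0$ because $A^{p}$ with $p\gtrsim L/2$ outgrows every power of $L$.

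The decisive term is the mixing factor, whose exponent is $10\sqrt e\,\alpha_T(f)^{\theta}\,M/R$ with $M:=A^\ell$, $R:=P_2+Q_2$ and $\theta:=R/(2R+M)$. Here $M/R\asymp L/\log L$ is large while $\theta\asymp(\log L)/L$ is small, so this exponent stays bounded only if $\alpha_T(f)^{\theta}$ defeats the prefactor, i.e. if $\theta\,f g(f)\gtrsim\log(M/R)\asymp\log L$, i.e. if $f g(f)\gtrsim L$. Since $f=2\ceil*{\log Q_2/\log A}\approx 2\log_A P_2$, this is exactly where the super-exponential rate \eqref{EqSuperExpMixing} enters. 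Writing $\phi(n):=ng(n)$, which increases to $\infty$, I would take $f$ to be the smallest even integer with $\phi(f)\gtrsim L$, that is $f\approx\phi^{-1}(L)$, and set $P_2=Q_2=\ceil*{A^{f/2}}$ and $A^\ell\asymp P_2\,L/\log L$. Because $g\to\infty$ one has $\phi^{-1}(L)\le L$ eventually, so $\ell\approx f/2+\log_A(L/\log L)$ stays below $L/2$ and $p\gtrsim L/2$ as the variance step demanded; at the same time $\alpha_T(f)^{\theta}\le\exp(-\theta\phi(f))\le\exp(-c\log L)$ drives the mixing factor to a constant. Balancing $\ell$, $P_2$ and $f$ against $L$ in this way — $\ell$ small enough for the variance term and the negligible top, yet $f$ (hence $\ell$) large enough for the mixing factor — is the main obstacle, and it is precisely why the plain summability \eqref{EqSummableMixing} is insufficient for the mixing factor whereas the faster decay \eqref{EqSuperExpMixing} suffices.

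Finally I would assemble the estimate with a short case distinction in $\epsilon$. If $C|V_\ell|\le\epsilon|V_L|/2$, then $\{|\sum_{v\in V_L}Z_v|>\epsilon|V_L|\}\subseteq\{|\sum_{v\in V'}Z_v|>\epsilon|V_L|/2\}$, and Theorem~\ref{BernsteinTreeStrip} with the parameters above gives $\p(\,\cdot\,)\le 2e^{\cO(1)}\exp\{-c_2\epsilon L/\log L\}$ with $c_2$ of order $1/(eC)$. In the complementary regime $\epsilon<2C|V_\ell|/|V_L|\asymp 2CA^{\ell-L}$ one has $\epsilon L/\log L\le 2CA^{\ell-L}L/\log L\to 0$, so $c_2\epsilon L/\log L$ is bounded by an absolute constant $B$ and, enlarging $c_1$ to $2e^{\cO(1)}\vee e^{c_2B}$, the claimed inequality holds trivially since its right-hand side is then $\ge 1$. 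The integer roundings of $\ell,P_2,Q_2,f$ and the finitely many small values of $L$ only affect the constants $c_1,c_2$, and I would deal with them at the very end.
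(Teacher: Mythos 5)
Your proof is correct, and it shares the paper's skeleton: discard the top generations of $V_L$ via the deterministic bound $C$ times their cardinality, apply Theorem~\ref{BernsteinTreeStrip} to the remaining bottom strip, and tune $P_2,Q_2,\beta,f$ so that the leading factor yields $\exp(-c\,\epsilon L/\log L)$ while the other two factors in \eqref{EqBernsteinTreeStrip0b} stay bounded. Where you genuinely differ is the parameter regime, which is in effect the mirror image of the paper's. The paper cuts at generation $L-P_1$ with $P_1=\floor*{L^{\eta}}$, $\eta\in(0,1)$: its strip is thin (thickness $P_1=o(L)$, so the discarded wedge comprises almost all generations), its blocks are exponentially large, $P_2=Q_2=\floor*{D A^{L-P_1}\log L/(L-P_1)}$, and the separation is $f\approx 2(L-P_1)\asymp L$, so the mixing factor dies simply because $\alpha_T$ is evaluated at a distance of order $L$ --- for that factor alone an exponential decay rate (with $D$ chosen large) would already suffice. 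You instead cut at $\ell=o(L)$: a thick strip ($P=L-\ell\gtrsim L/2$), small blocks $P_2=Q_2=A^{f/2}$, with $f\approx\phi^{-1}(L)=o(L)$ (writing $\phi(n)=n\,g(n)$ as you do) chosen minimally so that the exponent of $\alpha_T(f)$ in \eqref{EqBernsteinTreeStrip0b}, which is of order $\log L/L$, multiplied by $f g(f)$ still exceeds a large multiple of $\log L$. This is exactly the point where your route needs the full strength of \eqref{EqSuperExpMixing}: $g\to\infty$ is what makes $f=o(L)$, hence $\ell<L/2$, possible. In exchange, your variance term is controlled more robustly ($pA^{-p}$ with $p\gtrsim L/2$ beats every power of $L$, whereas the paper needs $P_1A^{-P_1}=L^{\eta}A^{-L^{\eta}}$ to do that job). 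Both tunings obey the same constraint $P_2A^{P}\asymp A^{L}\log L/L$ forced by $\beta\,|V_L|\asymp L/\log L$; they merely distribute it differently between block size and strip thickness. Finally, your explicit case distinction in $\epsilon$ is a genuine improvement in rigor: the paper only bounds the wedge probability by an indicator and calls it negligible, and your observation that for $\epsilon\lesssim A^{\ell-L}$ the right-hand side of the claimed inequality can be made at least $1$ by enlarging $c_1$ is precisely what is needed to obtain the statement for all $\epsilon>0$ and all $L$ with uniform constants.
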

\begin{proof}[Proof of Theorem~\ref{ConcentrationTree}]
Let $P_1 := \floor*{L^{\eta}}$ for some $\eta\in (0,1)$. We partition $V_L$ in the following way: first we define the wedge which consist of the first $L-P_1$ generations
$$	W_L \coloneqq \{ (j,k)\in V_L:\, 0\le j < L - P_1 \}. $$
The remaining $P_1$ generations are collected in
$$ U_L \coloneqq \{(j,k) \in V_L:\, L-P_1 \le j < L \}.$$
The sums which correspond to these partitioning are $\widetilde{S}_L \coloneqq\sum_{v \in W_L} Z_v$ and $S_L \coloneqq \sum_{v \in U_L} Z_v$. Then we split the probability as follows,
\begin{align}
		\p\left( \left|\sum_{v\in V_L} Z_v \right| > \epsilon\, |V_L|		\right) & \le \p\left( |\widetilde{S}_L |	 > \frac{\epsilon}{2} |V_L|	\right) + \p\left( |S_L|	 > \frac{\epsilon}{2} |V_L|	\right) \label{EqConcentrationTree1} 
\end{align}
The first probability in \eqref{EqConcentrationTree1} is negligible because we find
\begin{align*}
	\p\left( |\widetilde{S}_L| > \frac{\epsilon}{2} |V_L|  \right) &\le \1{ 2C \frac{ A^{L- P_1}-1}{A-1} > \frac{\epsilon}{2} \frac{ A^L-1}{A-1} }.
\end{align*}
Thus, we can focus on the second probability in \eqref{EqConcentrationTree1}. We use Theorem~\ref{BernsteinTreeStrip}. We make the following definitions
\begin{align*}
		P_2 &\coloneqq Q_2\coloneqq\floor*{ D \frac{ A^{L-P_1} }{L-P_1} \log L }, \text{ for a sufficiently large constant $D\in\R_+$}\\
		\beta &\coloneqq \frac{ A-1 }{ 4eC P_2 (A^{P_1}-1) } \text{ and } f \coloneqq 2\ceil*{ \frac{ \log P_2}{\log A} }.
\end{align*}
Consider the exponent of the first factor given in \eqref{EqBernsteinTreeStrip0b}: one finds that there is a constant $c\in\R_+$ which does neither depend on $L$ nor on $\epsilon$ nor on the $Z_v$ such that
\begin{align}
		 \exp\left\{	-\frac{\epsilon}{2} \beta\, |V_L|  \right\} &\le \exp\left\{-c \, \epsilon \frac{L}{\log L }	\right\}. \label{EqConcentrationTree2}
\end{align}
The second factor in \eqref{EqBernsteinTreeStrip0b} is given by
\begin{align}\label{EqConcentrationTree3}
		\exp \left\{	B\sqrt{e} \alpha_T( f )^{2P_{2}/(4 P_{2} + A^{L-P_1} ) } \frac{ A^{L-P_1}}{2 P_{2}}	\right\}.
\end{align}
We can derive the following bound for the mixing coefficient and the exponent inside the $\exp$-function in \eqref{EqConcentrationTree3}
\begin{align*}
	\alpha_T(f)^{2 P_{2}/(4 P_{2} + A^{L-P_1} ) } &\le \exp\left\{ - \frac{D}{5} \log L \left( 1 + \frac{\log( D \log L /(L-P_1) ) }{ (L-P_1)\log A } \right)\right\}.
	\end{align*}
Consider the second factor inside the $\exp$-function in \eqref{EqConcentrationTree3}, it is $ A^{L-P_1}/P_2 \le (L-P_1) / \log L$. In particular, the second factor in \eqref{EqConcentrationTree3} is uniformly bounded for all $L\in \N_+$ if $D$ is sufficiently large. Consider the third factor in \eqref{EqBernsteinTreeStrip0b}. Since the mixing coefficients decay sufficiently fast, we can derive the following inequality 
\begin{align*}
		&\exp\left\{ 4e \beta^2 (P_{2})^2 \left(	\frac{A^{P_1}-1}{A-1}\sigma^2 + 4C^2 \sum_{k=1}^{2(P_1-1)} N(P_1,k) \alpha_T(k)	\right) \left( \frac{A^{L-P_1}}{ 2 P_2 } + 1 \right)\right\}  \le \exp\left\{ c \frac{ P_1 (L-P_1) }{A^{P_1} \log L }	\right\},
\end{align*}
for a suitable constant $c\in\R$. In particular, this expression is uniformly bounded over all $L\in\N$. All in all, we have shown that there are constants $c_1, c_2 \in \R_+$ such that for the second probability in \eqref{EqConcentrationTree1} is bounded as
\[
		\p\left( |S_L|	 > \frac{\epsilon}{2} |V_L|	\right) \le c_1 \p\left(	- c_2\, \epsilon \frac{L}{\log L} \right),
\]
where, the asymptotic speed is determined by \eqref{EqConcentrationTree2}. This completes the proof.
\end{proof}

The previous theorem can be applied to exponentially growing graphs as well, we have the useful corollary:
\begin{corollary}[Concentration inequality for exponentially growing graphs]\label{ConcentrationGraph}
Let $G=(V,E' \cup \tilde{E} )$ be a graph growing at an exponential rate $A$ endowed with a random field $Z$ as in Theorem~\ref{ConcentrationTree}. Then there are constants $c_1, c_2 \in \R_+$ such that for all $L\in\N$ and $\epsilon > 0$
\begin{align*}
		\p\left( \frac{1}{|V_L|} \left|\sum_{v\in V_L} Z_v \right| > \epsilon	\right) \le c_1 \exp\left\{	- c_2\, \epsilon \frac{L}{\log L} \right\}.
\end{align*}
\end{corollary}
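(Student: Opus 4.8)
The plan is to reduce the statement to the tree case already settled in Theorem~\ref{ConcentrationTree}. The decisive observation is that the node set $V$, the generation labels $(j,k)$, the set of the first $L$ generations $V_L$, and hence the sum $\sum_{v\in V_L} Z_v$ depend only on the spanning tree $T\coloneqq (V,E')$ and are completely unaffected by the additional edges in $\tilde E$. Passing from $T$ to $G=(V,E'\cup\tilde E)$ therefore changes nothing about the random variables being summed; it only alters the natural graph metric, and through it the $\alpha$-mixing coefficients. Thus it suffices to show that the mixing hypothesis imposed on $G$ forces the corresponding tree hypothesis \eqref{EqSummableMixing} for $T$, after which Theorem~\ref{ConcentrationTree} applies verbatim.

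First I would compare the two metrics. Set $M\coloneqq \sup\{ d_T(v,w): (v,w)\in\tilde E\}$, which is finite by the definition of a graph growing at an exponential rate. Since every edge of $T$ is also an edge of $G$, any $T$-path is a $G$-path, so $d_G(v,w)\le d_T(v,w)$ for all $v,w\in V$. Conversely, every edge of $G$ is either an edge of $E'$ (a $T$-path of length $1$) or an edge of $\tilde E$ (joining two nodes at $T$-distance at most $M$); concatenating, a $G$-geodesic of length $\ell$ lifts to a $T$-walk of length at most $M\ell$, whence $d_T(v,w)\le M\, d_G(v,w)$. Consequently the two metrics are bi-Lipschitz equivalent, $d_G\le d_T\le M\, d_G$, and the same bounds hold after extending the metrics to subsets $I,J\subseteq V$.

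Next I would transfer the mixing. Because $d_T\le M\,d_G$ and $d_T$ takes integer values, $d_T(I,J)\ge n$ forces $d_G(I,J)\ge \ceil*{n/M}$, so the pairs $(I,J)$ entering the definition of $\alpha_T(n)$ form a subcollection of those entering $\alpha_G(\ceil*{n/M})$; taking suprema gives $\alpha_T(n)\le \alpha_G(\ceil*{n/M})$. Hence if $Z$ is strong mixing on $G$ with super-exponentially decaying coefficients as in \eqref{EqSuperExpMixing}, then $\alpha_T(n)\le \exp(-(n/M)\,g(n/M))=\exp(-n\,\tilde g(n))$ with $\tilde g(n)\coloneqq g(n/M)/M\to\infty$ increasing, so $Z$ is super-exponentially mixing on $T$ as well; by the discussion preceding Theorem~\ref{ConcentrationTree} this yields \eqref{EqSummableMixing} for $\alpha_T$ via the bound $N(P,k)\in\cO(P A^{P+k/2})$ from Lemma~\ref{TreeNumberOfPairs}.

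Having verified that $Z$ satisfies on $T$ all the hypotheses of Theorem~\ref{ConcentrationTree} (boundedness, bounded variance, mean zero, and \eqref{EqSummableMixing}), I would apply that theorem to the tree $T$ and the same set $V_L$. Since the summands and $|V_L|$ are identical in $G$ and $T$, the conclusion $\p(|V_L|^{-1}|\sum_{v\in V_L}Z_v|>\epsilon)\le c_1\exp\{-c_2\,\epsilon\,L/\log L\}$ transfers immediately, possibly with constants $c_1,c_2$ rescaled to absorb the factor $M$ from the mixing comparison. The only genuinely nontrivial step is the metric lifting $d_T\le M\,d_G$, i.e.\ checking that no shortcut created by $\tilde E$ can shorten $T$-distances by more than the bounded factor $M$; everything else is bookkeeping that leaves the sum over $V_L$ untouched.
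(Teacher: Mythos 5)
Your proposal is correct and follows essentially the same route as the paper: compare the tree metric and the graph metric via the bound $d_G \le d_T \le M\,d_G$ coming from $M=\sup\{d_T(v,w):(v,w)\in\tilde E\}<\infty$, deduce the mixing-coefficient relation (the paper states it as $\alpha_T(\ceil*{S\,\cdot\,})\le\alpha_G\le\alpha_T$, equivalent to your $\alpha_T(n)\le\alpha_G(\ceil*{n/M})$), and then invoke Theorem~\ref{ConcentrationTree} on the spanning tree, noting that $V_L$ and the sum are unchanged. Your write-up is in fact more explicit than the paper's terse proof, particularly in lifting $G$-geodesics to $T$-walks and in verifying that super-exponential decay of $\alpha_G$ transfers to a super-exponential bound for $\alpha_T$ and hence to condition~\eqref{EqSummableMixing}.
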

\begin{proof}[Proof of Corollary~\ref{ConcentrationGraph}]
We only need to show that the mixing conditions for the tree $T=(V,E')$ are fulfilled. The condition that $S\coloneqq \sup \{ d_T(v,w) \,|\, (v,w)\in \tilde{E} \} < \infty$ implies that 
$$ 1 \vee \frac{ d_T(v,w)}{S}  \le d_G(v,w) \le d_T(v,w) .$$
In particular, the mixing rates w.r.t.\ the tree and the whole graph structure satisfy asymptotically the inequality relations $\alpha_T( \ceil*{S\cdot} ) \le \alpha_G \le \alpha_T$.
Thus, we can conclude the statement from Theorem~\ref{ConcentrationTree}.
\end{proof}

\appendix
\section{Appendix}\label{Appendix}

\begin{proposition}[\cite{davydov1968convergence}]\label{davydovsIneq}
Let $\pspace$ be a probability space and let $\cG, \cH \subseteq \cA$ be sub-$\sigma$-algebras. Denote by $\alpha \coloneqq \sup\{ |\p(A \cap B) - \p(A)\p(B) |:\, A \in \cG, B\in \cG \}$ the $\alpha$-mixing coefficient between $\cG$ and $\cH$. Let $p,q,r \ge 1$ be H{\"o}lder conjugate. Let $\xi$ (resp. $\eta$) be in $L^p(\p)$ and $\cG$-measurable (resp. in $L^q(\p)$ and $\cH$-measurable). Then
\[
	\left| \text{Cov}(\xi,\eta) \right|  \le 10\, \alpha^{1/r} \norm{\xi}_{L^p(\p)} \norm{\eta}_{L^q(\p)}
\]
\end{proposition}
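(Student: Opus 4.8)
The statement is Davydov's classical covariance inequality, so the plan is to reconstruct its proof from scratch in two stages: first a base estimate for bounded random variables, and then an extension to the $L^p\times L^q$ setting that produces the exponent $1/r$. Throughout I write $\alpha = \alpha(\cG,\cH)$, and I recall that $p,q,r\ge 1$ being H\"older conjugate means $1/p+1/q+1/r=1$.

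I would begin with the bounded case: if $\phi$ is $\cG$-measurable and $\psi$ is $\cH$-measurable with $\norm{\phi}_\infty,\norm{\psi}_\infty\le 1$, then $|\text{Cov}(\phi,\psi)|\le 4\alpha$. The key device is a double conditioning (sign) argument. Since $\phi$ is $\cG$-measurable, $\text{Cov}(\phi,\psi)=\E{\phi\,(\E{\psi\mid\cG}-\E{\psi})}$, so $|\text{Cov}(\phi,\psi)|\le \E{|\E{\psi\mid\cG}-\E{\psi}|}$. Taking $\phi_0:=\operatorname{sign}(\E{\psi\mid\cG}-\E{\psi})$, which is $\cG$-measurable with $|\phi_0|\le1$, turns the right-hand side into $\text{Cov}(\phi_0,\psi)$; conditioning symmetrically on $\cH$ and introducing $\psi_0:=\operatorname{sign}(\E{\phi_0\mid\cH}-\E{\phi_0})$ reduces the whole chain to $\text{Cov}(\phi_0,\psi_0)$ with $\phi_0,\psi_0\in\{-1,+1\}$. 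Writing $\phi_0=2\mathbbm{1}_{A}-1$, $\psi_0=2\mathbbm{1}_{B}-1$ with $A\in\cG$, $B\in\cH$ gives $\text{Cov}(\phi_0,\psi_0)=4(\p(A\cap B)-\p(A)\p(B))$, which is bounded by $4\alpha$. Rescaling then yields $|\text{Cov}(U,V)|\le 4\alpha\norm{U}_\infty\norm{V}_\infty$ for arbitrary bounded $U,V$.

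For the full statement I would pass from bounded to $L^p\times L^q$ variables. A purely qualitative route is truncation: clipping $\xi$ at level $M$ and $\eta$ at level $N$ splits $\text{Cov}(\xi,\eta)$ into a bounded-bounded term controlled by $4\alpha MN$ plus tail terms controlled through H\"older by $\norm{\bar\xi_M}_p\norm{\eta}_q$ and $\norm{\xi}_p\norm{\bar\eta_N}_q$ (using $1/p+1/q\le1$, so that $L^q\hookrightarrow L^{p'}$ on the probability space). This already gives decay, but optimizing $M,N$ recovers the sharp rate $\alpha^{1/r}$ only under extra integrability, since $\norm{\bar\xi_M}_p\to0$ carries no rate from $\xi\in L^p$ alone. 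In the bare $L^p\times L^q$ setting I would therefore use the distributional form: bound the covariance by a quantile integral, $|\text{Cov}(\xi,\eta)|\le c\int_0^{\alpha}Q_{|\xi|}(u)\,Q_{|\eta|}(u)\intd{u}$, where $Q_{|\xi|}$ is the quantile function of $|\xi|$, and then apply the three-factor H\"older inequality with exponents $p,q,r$:
\[
\int_0^{\alpha}Q_{|\xi|}Q_{|\eta|}\intd{u}\le\alpha^{1/r}\Big(\int_0^1 Q_{|\xi|}^{p}\intd{u}\Big)^{1/p}\Big(\int_0^1 Q_{|\eta|}^{q}\intd{u}\Big)^{1/q}=\alpha^{1/r}\norm{\xi}_p\norm{\eta}_q,
\]
where I used the layer-cake identity $\int_0^1 Q_{|\xi|}^p\intd{u}=\E{|\xi|^p}$ and the monotone extension of the integration range from $[0,\alpha]$ to $[0,1]$.

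The main obstacle is the intermediate quantile estimate $|\text{Cov}(\xi,\eta)|\le c\int_0^{\alpha}Q_{|\xi|}Q_{|\eta|}\intd{u}$ — the one step that actually extracts the mixing coefficient — rather than the final H\"older split, which is routine. Establishing it requires combining the bounded-case bound of the second paragraph with a careful layering of $\xi$ and $\eta$ by their level sets and summing the resulting contributions, and it is precisely the bookkeeping here that fixes the universal constant (the value $10$). I expect no conceptual difficulty beyond this: once the quantile bound is in hand, the stated inequality follows immediately, and the H\"older-conjugacy of $p,q,r$ is used only to make the three-factor H\"older step close.
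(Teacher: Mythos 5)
The paper itself contains no proof of Proposition~\ref{davydovsIneq}: it is imported verbatim from Davydov's 1968 paper, so your attempt has to be judged against the classical arguments. Your first stage is correct and complete: the double sign/conditioning trick does give $|\mathrm{Cov}(\phi,\psi)|\le 4\alpha\norm{\phi}_\infty\norm{\psi}_\infty$, and your closing step (three-factor H\"older on $[0,1]$ together with the identity $\int_0^1 Q_{|\xi|}^p\intd{u}=\E{|\xi|^p}$) is also fine. The genuine gap is the middle inequality $|\mathrm{Cov}(\xi,\eta)|\le c\int_0^\alpha Q_{|\xi|}(u)Q_{|\eta|}(u)\intd{u}$: this is Rio's covariance inequality, it is the \emph{only} place where the mixing coefficient is actually extracted from the dependence structure, and you do not prove it --- you explicitly defer it as ``the main obstacle'' and describe it as layering plus bookkeeping. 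A proof requires a real argument, not bookkeeping: write $\xi=\int_0^\infty\big(\1{\xi>s}-\1{\xi<-s}\big)\intd{s}$ and likewise for $\eta$, expand $\mathrm{Cov}(\xi,\eta)$ by Fubini into covariances of indicators, bound each of the four resulting terms by $\min\{\alpha,\p(|\xi|>s),\p(|\eta|>t)\}$ (your stage-one bound supplies the $\alpha$; the bounds by the two tail probabilities are elementary), and then convert $\int_0^\infty\!\int_0^\infty\min\{\alpha,\p(|\xi|>s),\p(|\eta|>t)\}\intd{s}\intd{t}$ into $\int_0^\alpha Q_{|\xi|}(u)Q_{|\eta|}(u)\intd{u}$ using $\min\{a,b,c\}=\int_0^\infty\1{u<a}\1{u<b}\1{u<c}\intd{u}$, Fubini once more, and the equivalence $u<\p(|\xi|>s)\Leftrightarrow s<Q_{|\xi|}(u)$ for the generalized inverse of the tail function. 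None of this appears in your text, and since it is the single nonroutine step, the proposal as written is not a proof.

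Moreover, the reason you gave for abandoning the truncation route is wrong, and that route --- which is Davydov's original proof --- closes the argument using only the tools you already established. Writing $\xi=\xi_M+\bar\xi_M$ and $\eta=\eta_N+\bar\eta_N$, the tail term should not be paired as $\norm{\bar\xi_M}_p\norm{\eta}_q$; instead pair $\bar\xi_M$ against $\eta$ with exponents $(q',q)$, where $q'=q/(q-1)\le p$ because $1/p+1/q\le 1$. On the event $\{|\xi|>M\}$ one has the pointwise bound $|\xi|^{q'}\le M^{q'-p}|\xi|^p$, hence $\norm{\bar\xi_M}_{q'}\le M^{1-p/q'}\norm{\xi}_p^{p/q'}$: the tail \emph{does} carry a rate in $M$, by interpolation rather than by unquantified dominated convergence, and no extra integrability is needed. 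Choosing $M=\norm{\xi}_p\,\alpha^{-1/p}$ and $N=\norm{\eta}_q\,\alpha^{-1/q}$ and using $1/q'=1/p+1/r$ (and symmetrically $1/p'=1/q+1/r$ for the other tail), the bounded term $4\alpha MN$ and both tail terms each reduce to a constant multiple of $\alpha^{1/r}\norm{\xi}_p\norm{\eta}_q$, with constants summing to $8\le 10$. So either complete Rio's lemma as sketched above, or repair the H\"older pairing in the truncation computation; as it stands, your proposal identifies the correct architecture but omits the one step that does the work and misdiagnoses the elementary alternative.
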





\end{document}